\documentclass[reqno]{amsart}

\usepackage[latin1]{inputenc}
\usepackage{amssymb}
\usepackage{graphicx}
\usepackage{amscd}
\usepackage[hidelinks]{hyperref}
\usepackage{color}
\usepackage{float}
\usepackage{graphics,amsmath,amssymb}
\usepackage{amsthm}
\usepackage{amsfonts}
\usepackage{latexsym}
\usepackage{epsf}
\usepackage{enumerate}
\usepackage{xifthen}
\usepackage{mathrsfs}
\usepackage{dsfont}
\usepackage{makecell}
\usepackage[FIGTOPCAP]{subfigure}
\usepackage{amsmath}
\allowdisplaybreaks[4]
\usepackage{listings}
\usepackage{etoolbox}
\usepackage{fancyhdr}

\setlength{\headheight}{11pt}

\pagestyle{fancy}
\fancyhead[LO]{\footnotesize\shorttitle}
\fancyhead[RO]{\footnotesize\thepage}
\fancyhead[LE]{\footnotesize\thepage}
\fancyhead[RE]{\footnotesize\shortauthors}
\fancyfoot{}

\lstset{
    basicstyle=\ttfamily,
    xleftmargin=2em,xrightmargin=1em,
    breaklines=true
}

% If you are using figures, uncomment the following line.
%\usepackage{graphicx}

% If you are using colors, uncomment the following line.
%\usepackage[usenames,dvipsnames]{color}

 \newtheoremstyle{mytheorem}% name % cf. thmtest.tex of AMSLaTeX
 {3pt}%      Space above
 {3pt}%      Space below
 {\slshape}% Body font
 {}%         Indent amount (empty = no indent,
 % \parindent = para indent)
 {\bfseries}% Thm head font
 {.}%        Punctuation after thm head
 { }%        Space after thm head: " " = normal interword space;
 % \newline = linebreak
 {}%         Thm head spec (can be left empty, meaning `normal')

\numberwithin{equation}{section}

\theoremstyle{theorem}
\newtheorem{theorem}{Theorem}[section]
\newtheorem{theoremx}{Theorem}

\newtheorem{propositionx}{Proposition}

\newtheorem*{proposition*}{Proposition}

\theoremstyle{definition}

\newtheorem{remark}{Remark}[section]

\newcommand{\Keywords}[1]{\ifthenelse{\isempty{#1}}{}{\smallskip \smallskip \noindent \textbf{Keywords}. #1}}
\newcommand{\MSC}[2][2010]{\ifthenelse{\isempty{#2}}{}{\smallskip \smallskip \noindent \textbf{#1MSC}. #2}}
\newcommand{\abstractnote}[1]{\ifthenelse{\isempty{#1}}{}{\smallskip \smallskip \noindent \textsuperscript{\dag}#1}}

\makeatletter
\def\specialsection{\@startsection{section}{1}%
  \z@{\linespacing\@plus\linespacing}{.5\linespacing}%
%  {\normalfont\centering}}% DELETED
  {\normalfont}}% NEW
\def\section{\@startsection{section}{1}%
  \z@{.7\linespacing\@plus\linespacing}{.5\linespacing}%
%  {\normalfont\scshape\centering}}% DELETED
  {\normalfont\scshape}}% NEW
\patchcmd{\@settitle}{\uppercasenonmath\@title}{\Large\boldmath}{}{}
\patchcmd{\@settitle}{\begin{center}}{\begin{flushleft}}{}{}
\patchcmd{\@settitle}{\end{center}}{\end{flushleft}}{}{}
\patchcmd{\@setauthors}{\MakeUppercase}{\normalsize}{}{}
\patchcmd{\@setauthors}{\centering}{\raggedright}{}{}
\patchcmd{\section}{\scshape}{\large\bfseries\boldmath}{}{}
\patchcmd{\subsection}{\bfseries}{\bfseries\boldmath}{}{}
\renewcommand{\@secnumfont}{\bfseries}
\patchcmd{\@startsection}{\@afterindenttrue}{\@afterindentfalse}{}{}
\patchcmd{\abstract}{\leftmargin3pc}{\leftmargin1pc}{}{}

\def\maketitle{\par
  \@topnum\z@ % this prevents figures from falling at the top of page 1
  \@setcopyright
  \thispagestyle{empty}% this sets first page specifications
  \ifx\@empty\shortauthors \let\shortauthors\shorttitle
  \else \andify\shortauthors
  \fi
  \@maketitle@hook
  \begingroup
  \@maketitle
  \toks@\@xp{\shortauthors}\@temptokena\@xp{\shorttitle}%
  \toks4{\def\\{ \ignorespaces}}% defend against questionable usage
  \edef\@tempa{%
    \@nx\markboth{\the\toks4
      \@nx\MakeUppercase{\the\toks@}}{\the\@temptokena}}%
  \@tempa
  \endgroup
  \c@footnote\z@
  \@cleartopmattertags
}
\makeatother

%%%%%%%%%%%%%%%%%%%

% Put any definitions or macros here

%%%%%%%%%%%%%%%%%%%

\title[Distribution of reducible polynomials]{Distribution of reducible polynomials with a given coefficient set}

\author[S. Chern]{Shane Chern}
\address{Department of Mathematics, Pennsylvania State University, University Park, PA 16802, USA}
\email{shanechern@psu.edu}

\date{}

\begin{document}

{\footnotesize\noindent \textit{Bull. Math. Soc. Sci. Math. Roumanie (N.S.)} \textbf{60(108)} (2017), no. 2, 141--146.}

\bigskip \bigskip

\maketitle

\begin{abstract}
For a given set of integers $\mathcal{S}$, let $\mathcal{R}_n^*(\mathcal{S})$ denote the set of reducible polynomials $f(X)=a_nX^n+a_{n-1}X^{n-1}+\cdots+a_1X+a_0$ over $\mathbb{Z}[X]$ with $a_i\in\mathcal{S}$ and $a_0a_n\ne 0$. In this note, we shall give an explicit bound of $|\mathcal{R}_n^*(\mathcal{S})|$. We also present an application of this bound to reducible bivariate polynomials over $\mathbb{Z}[X,Y]$.

\Keywords{Reducible polynomial, bivariate polynomial, counting function, Euler's identity.}

\MSC{Primary 11C08; Secondary 11N45.}
\end{abstract}

\section{Introduction}

Here and throughout this note, we say a polynomial is reducible if it is reducible over $\mathbb{Z}[X]$ or $\mathbb{Z}[X,Y]$. Furthermore, the notation $\mathbb{P}(F\ \mathrm{reducible})$ denotes the probability of $F$ being reducible under a given coefficient set. In a recent paper \cite{BSK2016}, L. Bary-Soroker and G. Kozma proved the following

\begin{theoremx}\label{th:BSK}
Let $F=F(X,Y)=\sum_{i,j\le n}\varepsilon_{i,j}X^iY^j$ be a bivariate polynomial of degree $n$ with random coefficients $\varepsilon_{i,j}\in\{\pm 1\}$. Then
$$\lim_{n\to\infty}\mathbb{P}(F\ \mathrm{reducible})=0.$$
\end{theoremx}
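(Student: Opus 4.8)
The plan is to deduce the theorem from the explicit bound for $|\mathcal{R}_n^*(\mathcal{S})|$ by specializing $Y$ to the integer $2$. To each sign matrix $(\varepsilon_{i,j})_{0\le i,j\le n}$ I associate the univariate polynomial $f(X):=F(X,2)=\sum_{i=0}^{n}c_iX^i$, where $c_i:=\sum_{j=0}^{n}\varepsilon_{i,j}2^j$. Because the top term dominates, every $\pm1$-combination $\sum_{j}s_j2^j$ is nonzero and distinct sign patterns give distinct values; hence $F\mapsto f$ is a bijection from the $2^{(n+1)^2}$ polynomials $F$ onto the set of polynomials $\sum_{i=0}^{n}a_iX^i$ with each $a_i$ in $\mathcal{S}_2:=\{\sum_{j=0}^{n}s_j2^j:s_j\in\{\pm1\}\}$, a set with $|\mathcal{S}_2|=2^{n+1}$ and $0\notin\mathcal{S}_2$. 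Since moreover $c_0c_n\ne0$ always, one gets $f\in\mathcal{R}_n^*(\mathcal{S}_2)$ whenever $f$ is reducible.

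The first step is to identify the only ways a reducible $F$ can fail to specialize to a reducible $f$. Set $g_i(Y):=\sum_{j=0}^{n}\varepsilon_{i,j}Y^j$ and $h_j(X):=\sum_{i=0}^{n}\varepsilon_{i,j}X^i$; then $F$ has a factor in $\mathbb{Q}[Y]\setminus\mathbb{Q}$ precisely when $\gcd(g_0,\dots,g_n)\ne1$, and dually for $\mathbb{Q}[X]\setminus\mathbb{Q}$ and the $h_j$. If $F=P_1\cdots P_r$ with $r\ge2$ is the factorization into irreducibles over $\mathbb{Q}$ and no $P_k$ lies in $\mathbb{Q}[Y]$, then $\deg_XP_k\ge1$ for every $k$; the leading $X$-coefficient of $F$ is the $\pm1$-polynomial $g_n(Y)$, which is nonzero at $2$ (its top term exceeds the sum of the others), so the leading $X$-coefficient of each $P_k$ is also nonzero at $2$, whence $P_k(X,2)$ retains its $X$-degree $\deg_XP_k\ge1$ and $f=\prod_kP_k(X,2)$ is a product of at least two non-constant polynomials. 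Using the injectivity of $F\mapsto f$ and a union bound,
\[
\mathbb{P}(F\text{ reducible})\ \le\ \frac{|\mathcal{R}_n^*(\mathcal{S}_2)|}{2^{(n+1)^2}}\ +\ \mathbb{P}\bigl(\gcd(g_0,\dots,g_n)\ne1\bigr)\ +\ \mathbb{P}\bigl(\gcd(h_0,\dots,h_n)\ne1\bigr),
\]
and transposing the sign matrix shows the last two probabilities are equal.

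The remaining task is to show each term tends to $0$. For the first, the explicit bound applied with $\mathcal{S}=\mathcal{S}_2$ gives $|\mathcal{R}_n^*(\mathcal{S}_2)|=o\bigl(|\mathcal{S}_2|^{n+1}\bigr)=o\bigl(2^{(n+1)^2}\bigr)$. For the common-factor term, $g_0,\dots,g_n$ are i.i.d.\ uniform $\pm1$-polynomials of degree $n$; any monic $p\in\mathbb{Z}[Y]$ dividing a $\pm1$-polynomial has all its roots in the annulus $\tfrac12<|z|<2$, so its coefficients are bounded purely in terms of $d:=\deg p$, which bounds the number of such $p$ of degree $d$ by some constant $C_d$ (and, crudely, also by $2^{2n+1}$). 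Dividing $g_0$ by the monic $p$ shows that $p\mid g_0$ forces the $d$ lowest coefficients of $g_0$ once the higher ones are fixed, so $\mathbb{P}(p\mid g_0)\le2^{-d}$ and hence $\mathbb{P}(p\mid g_i\ \forall i)\le2^{-d(n+1)}$. Summing over $p$---using $C_d$ for $d$ up to about $\sqrt n$ and $2^{2n+1}$ beyond---gives $\mathbb{P}(\gcd(g_0,\dots,g_n)\ne1)\le\sum_{d\ge1}\min(C_d,2^{2n+1})\,2^{-d(n+1)}\to0$, and likewise for the $h_j$. Combining the three bounds yields $\lim_{n\to\infty}\mathbb{P}(F\text{ reducible})=0$.

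The main obstacle I anticipate is the lossless transfer of reducibility under $Y\mapsto2$: the dichotomy above between a univariate ($\mathbb{Q}[X]$- or $\mathbb{Q}[Y]$-) component and the genuinely bivariate case, together with the elementary nonvanishing of a $\pm1$-polynomial at $2$. Granting that, the quantitative content is entirely supplied by the bound on $|\mathcal{R}_n^*(\mathcal{S}_2)|$, and the common-factor estimate is self-contained combinatorics resting on the roots-in-an-annulus fact.
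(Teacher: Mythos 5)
Your argument is correct in substance, and its engine is the one the paper itself relies on: specialize $Y=2$, observe that each coefficient $\sum_{j}\varepsilon_{i,j}2^j$ ranges bijectively over the set $\mathcal{S}_2$ of odd integers of absolute value at most $2^{n+1}-1$, and bound $\mathbb{P}(F(X,2)\ \mathrm{reducible})$ by $|\mathcal{R}_n^*(\mathcal{S}_2)|/2^{(n+1)^2}$ using Theorem \ref{th:main} (which, applied to this $\mathcal{S}_2$, is exactly Proposition \ref{prop:BSK}). Where you genuinely deviate is the treatment of the degenerate factorizations. The paper, following Bary-Soroker and Kozma and as in its own proof of Theorem \ref{th:3.1}, uses the trichotomy ``$F(X,2)$ reducible, or $F(2,Y)$ reducible, or $F=f(X)g(Y)$'': the second event has the same probability as the first by transposing the coefficient matrix, and the third has probability at most $2^{2(n+1)}/2^{(n+1)^2}$, so the degenerate cases cost essentially nothing. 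You instead use the dichotomy ``$F(X,2)$ reducible, or the row polynomials $g_0,\dots,g_n$ share a non-constant factor,'' and you bound the latter event from scratch (roots of $\pm1$ polynomials lie in $\tfrac12<|z|<2$, a monic degree-$d$ divisor forces the $d$ lowest coefficients, independence across rows). This is a valid, self-contained alternative, and you are more careful than the paper about why specialization at $Y=2$ preserves the $X$-degrees of the factors; the price is that your common-factor estimate is noticeably heavier than the one-line product case it replaces. One small repair there: the crude count $2^{2n+1}$ for candidate divisors of large degree does not follow from the coefficient bounds (the number of monic integer polynomials of degree $d$ with all roots in $|z|<2$ grows superexponentially in $d$); justify it instead by noting that a candidate must divide one of the $2^{n+1}$ possible $\pm1$ polynomials $g_0$, each of which has at most $2^{n}$ monic divisors.

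The one step you should not leave as an assertion is $|\mathcal{R}_n^*(\mathcal{S}_2)|=o\bigl(2^{(n+1)^2}\bigr)$. Applying Theorem \ref{th:main} with the generic modulus $M=\max\mathcal{S}_2-\min\mathcal{S}_2+1\approx 2^{n+2}$ fails: together with $\sum_{m\le x}d(m)\sim x\log x$ it yields a bound of order $n^{3}\,2^{(n+1)^2}$, which is useless. You must exploit that $\mathcal{S}_2$ consists of odd numbers and take $M=2^{n+1}+1$ (no two elements of $\mathcal{S}_2$ are congruent modulo this $M$, since their differences are even, nonzero, and smaller than $2M$ in absolute value); this is precisely the choice recorded in the paper's Remark after Theorem \ref{th:main} and mirrored in the proof of Theorem \ref{th:3.1}. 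With that choice your first term becomes $O(n^{3}/2^{n})$, and the proof closes as you intend.
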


This result originates from similar distribution problems of reducible univariate polynomials, which were studied for a long period. Let the \textit{height} of a polynomial $f(X)=a_nX^n+a_{n-1}X^{n-1}+\cdots+a_1X+a_0$ with coefficients $a_i\in\mathbb{Z}$ be defined as $H(f)=\max\{|a_i|:i=0,1,\ldots,n\}$. For a fixed integer $n \ge 2$ and a real parameter $h \ge 1$, let $\mathcal{R}_n(h)$ denote the set of reducible polynomials $f(X)$ over $\mathbb{Z}$ with degree $n\ge 2$ and height $H(f)\le h$, and $\mathcal{R}_n^*(h)$ the subset of $\mathcal{R}_n(h)$ with $f(0)\ne 0$. The bound of $|\mathcal{R}_n(h)|$ given by G. Kuba \cite{Kub2009} reads
\begin{equation}\label{eq:Kuba}
h^n\le |\mathcal{R}_n(h)|\le C_nh^n\quad \text{for all $n\ge 3$ and $g\ge 1$},
\end{equation}
where $C_n > 0$ is a constant depending only on $n$. In fact, the left hand side comes directly from the reducibility of polynomials with $f(0)=0$. On the other hand, the upper bound has been studied by many authors; see, e.g., \cite{Dor1965, Gal1973, PS1998, Riv2015}. Furthermore, if we restrict that the coefficients of polynomials should be chosen from a given set $\mathcal{S}$, it is also natural to ask for the bound of number of such reducible polynomials with degree $n$, or at least the probability $p_{n,\mathcal{S}}$ of such random polynomials as $n\to\infty$; see \cite{Kon1999} for the case $\mathcal{S}=\{0,1\}$ and \cite{Som} for the case $\mathcal{S}=\{\pm 1\}$.

However, considering the notorious difficulty of proving
$$\lim_{n\to\infty}p_{n,\mathcal{S}}=0$$
for some $\mathcal{S}$, as Bary-Soroker and Kozma mentioned, they wanted to seek for a modest generalization, that is, adding one degree of freedom, or more precisely, adding one more variable --- just like that given in the above theorem.

\section{Revisit of Bary-Soroker and Kozma's proof and our main result}

Before presenting our main result, let us go back to Bary-Soroker and Kozma's proof of Theorem \ref{th:BSK}. In my personal opinion, the most crucial part of their proof is the following proposition listed as Eq. (3) of their paper.

\begin{propositionx}\label{prop:BSK}
Let
$$\Omega(n,h)=\left\{f=\sum_{i=0}^na_iX^i:a_i \text{ odd and } H(f)\le 2h-1\right\}.$$
Then there exists an absolute constant $C > 0$ such that for any $n > 1$ and $h>2$ the probability that a random uniform polynomial $f\in \Omega(n,h)$ is reducible satisfies
$$\mathbb{P}_{\Omega(n,h)}(f\ \mathrm{reducible})\le C\cdot \frac{n(\log h)^2}{h}\left(1+\frac{1}{2h}\right)^n.$$
\end{propositionx}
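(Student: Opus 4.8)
It suffices to prove that the number $\mathcal{N}$ of reducible polynomials $f\in\Omega(n,h)$ satisfies $\mathcal{N}\le Cn(\log h)^{2}(2h+1)^{n}$ for an absolute constant $C$: since $|\Omega(n,h)|=(2h)^{n+1}$, dividing through and writing $(2h+1)^{n}/(2h)^{n}=(1+\tfrac1{2h})^{n}$ gives the asserted probability bound (after renaming $C$). So fix a reducible $f\in\Omega(n,h)$; note $\deg f=n\ge2$ since the leading coefficient is odd. Choosing an irreducible factor of smallest degree and invoking Gauss's lemma, we may write $f=g_{1}g_{2}$ with $g_{1},g_{2}\in\mathbb{Z}[X]$, $g_{1}$ irreducible (hence primitive), and $1\le\deg g_{1}\le\deg g_{2}$; put $k=\deg g_{1}$, so $1\le k\le n/2$. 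Reducing modulo $2$, the polynomial $\bar f=1+X+\cdots+X^{n}$ has nonzero constant and leading terms, so the constant and leading coefficients of $g_{1}$ and of $g_{2}$ are all odd. The plan is to bound, for each $k$, the quantity $\sum_{\deg g_{1}=k}N(g_{1})$, where $N(g_{1}):=\#\{f\in\Omega(n,h):g_{1}\mid f\ \text{and}\ \deg(f/g_{1})=n-k\}$, and then sum over $1\le k\le n/2$.

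The first ingredient is a per-factor estimate. Fix $g_{1}=\sum_{i=0}^{k}b_{i}X^{i}$ with $b_{0},b_{k}$ odd, and write $f=g_{1}g_{2}$ with $g_{2}=\sum_{j=0}^{n-k}c_{j}X^{j}$. I determine the coefficients $c_{0},c_{1},\dots$ successively, starting from whichever of $|b_{0}|$ and $|b_{k}|$ is larger; say it is $|b_{0}|$, so I build up from $c_{0}$. At the $j$-th step the coefficient of $X^{j}$ in $f$ equals $b_{0}c_{j}$ plus a quantity already fixed, and it must be an odd integer of absolute value at most $2h-1$; since $b_{0}$ is odd, the admissible $c_{j}$ form a single residue class modulo $|b_{0}|$ of prescribed parity, hence number at most $(2h-1)/|b_{0}|+1$. (The coefficients of $f$ of degree larger than $n-k$ impose further restrictions, which I discard.) Thus
$$N(g_{1})\ \le\ \Big(\frac{2h-1}{\max(|b_{0}|,|b_{k}|)}+1\Big)^{\,n-k+1}.$$
The crucial point is that the base is at most $2h$, not $4h$: it is exactly the oddness hypothesis — only every other integer in a given window is allowed — that halves the count at each step and in the end produces $(2h+1)^{n}$ rather than, say, $(4h)^{n}$.

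The dominant contribution comes from $k=1$, i.e.\ from linear factors $qX-p$ with $p,q$ odd and $\gcd(p,q)=1$. Applying the estimate above from both ends and taking the geometric mean gives $N(qX-p)\le\big(\tfrac{2h-1}{|p|}+1\big)^{n/2}\big(\tfrac{2h-1}{|q|}+1\big)^{n/2}$, so that
$$\sum_{\deg g_{1}=1}N(g_{1})\ \le\ \Big(\ \sum_{p\ \mathrm{odd}}\big(\tfrac{2h-1}{|p|}+1\big)^{n/2}\ \Big)^{2}.$$
By Landau's inequality $\max(|p|,|q|)=M(qX-p)\le M(f)\le\sqrt{n+1}\,(2h-1)$, so the inner sum is finite; for $n\ge4$ it is a convergent series dominated by its $|p|=1$ term and is $\ll(2h)^{n/2}$, while for $n\in\{2,3\}$ it is a harmonic-type sum of size $\ll(\log h)(2h)^{n/2}$. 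Either way the $k=1$ contribution is $\ll(\log h)^{2}(2h+1)^{n}$, and these are the two logarithms in the statement.

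The remaining — and hardest — step is the higher-degree factors $2\le k\le n/2$, where $\sum_{\deg g_{1}=k}N(g_{1})$ must be controlled; this amounts to counting the degree-$k$ polynomials $g_{1}$ that can divide a member of $\Omega(n,h)$. Landau's inequality gives $M(g_{1})\le M(f)\le\sqrt{n+1}\,(2h-1)$, and comparing the leading or trailing term of $f$ with the rest shows that every root of $f$, hence of $g_{1}$, lies in the annulus $\tfrac1{2h+1}\le|z|\le2h+1$. The obstacle is that the crude coefficient bound $|b_{i}|\le\binom{k}{i}M(g_{1})$, if used to count $g_{1}$ coefficient by coefficient, squanders a factor $\exp(O(k^{2}))$ — harmless for bounded $k$ but fatal once $k\asymp n$. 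One must instead count the admissible $g_{1}$ as though only about $k-1$ of its $k+1$ coefficients were free — the constant and leading ones being constrained by $b_{0}\mid a_{0}$, $b_{k}\mid a_{n}$ together with the Mahler-measure bound — for instance by exploiting the exact identity $M(g_{1})M(g_{2})=M(f)$ or by imposing that $g_{1}$ divide $f$ modulo several small primes; I expect this to be the main technical work. Granting it, the degree-$k$ contribution is $\ll(\log h)^{2}(2h+1)^{n}$ uniformly in $k$ — in fact a geometrically small fraction of it — so summing over $2\le k\le n/2$ and adding the $k=1$ term yields $\mathcal{N}\le Cn(\log h)^{2}(2h+1)^{n}$, as required.
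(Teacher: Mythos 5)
Your argument is incomplete at exactly the point where the proposition is hardest: the contribution of factors of degree $2\le k\le n/2$. For $k=1$ your per-factor estimate (determining the cofactor's coefficients successively, with at most $(2h-1)/|b_0|+1$ admissible choices at each step thanks to the parity constraint) is sound, and the two-ended geometric-mean device is standard. But for $k\ge 2$ you explicitly write ``I expect this to be the main technical work. Granting it\dots'': nothing is proved there, and the difficulty you yourself identify --- that enumerating the candidate divisors $g_1$ coefficient by coefficient squanders a factor $\exp(O(k^2))$, fatal once $k\asymp n$ --- is precisely the obstruction. Neither of the fixes you gesture at (multiplicativity of the Mahler measure, or imposing divisibility modulo several small primes) is carried out, and it is not clear either would give a bound uniform in $k$. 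As it stands, your proof establishes only the absence-of-linear-factors case, which is a genuinely easier statement than the proposition.

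The proof the paper relies on (Bary-Soroker--Kozma's, whose idea goes back to Rivin, and which is abstracted here as Theorem \ref{th:main}) avoids enumerating integer factors altogether, which is why it handles all splitting degrees uniformly. One fixes $s+t=n$ and the end coefficients $b_0\mid a_0$, $c_0=a_0/b_0$, $b_s\mid a_n$, $c_t=a_n/b_s$, and reduces modulo $M=2h+1$: the $2h$ odd integers in $[-(2h-1),2h-1]$ are pairwise incongruent mod $M$ (their differences are even and of absolute value less than $2(2h+1)$, while $M$ is odd), so reduction mod $M$ is injective on $\Omega$. Hence each pair $(\bar p,\bar q)$ of polynomials over $\mathbb{Z}/M\mathbb{Z}$ with the prescribed degrees and prescribed constant and leading coefficients identifies at most one $f=pq$ in $\Omega$, and there are exactly $M^{s-1}M^{t-1}=M^{n-2}$ such pairs. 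Summing over the $n-1$ choices of $(s,t)$ and over the divisors of $a_0$ and $a_n$ gives a count
$$\ll n\,(2h+1)^{n-2}\Bigl(\sum_{\substack{0<a\le 2h-1\\ a\ \mathrm{odd}}}d(a)\Bigr)^2\ll n\,(2h+1)^{n-2}h^2(\log h)^2,$$
and dividing by $|\Omega|=(2h)^{n+1}$ yields the stated probability bound; note that the $(\log h)^2$ arises from the divisor sums over $b_0\mid a_0$ and $b_s\mid a_n$, not from small degrees as in your $k=1$ analysis. To complete your own approach you would need, for every $k\ge 2$, a count of admissible degree-$k$ divisors sharp enough to beat the $(2h)^{n+1}$ normalization uniformly in $k$; the mod-$M$ pair count is in effect the device that accomplishes exactly this, so the missing step is not a routine technicality but the core of the argument.
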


In view of their proof of this proposition, whose idea is due to I. Rivin \cite{Riv2015}, I note that we can even step further. Again, let $\mathcal{S}=\{s_1,s_2,\ldots,s_k\}$ be a given set of integers, and $\mathcal{S}^*=\mathcal{S}\backslash\{0\}$. We denote by $\mathcal{R}_n^*(\mathcal{S})$ the set of reducible polynomials $f(X)=a_nX^n+a_{n-1}X^{n-1}+\cdots+a_1X+a_0$ with $a_i\in\mathcal{S}$ and $a_0a_n\ne 0$. At last, let $d(n)=\sum_{d\mid n}1$ be the divisor function whose summation runs over all positive divisors of $n$. Our result is

\begin{theorem}\label{th:main}
Let $M$ be a positive integer such that
$$s_i\not\equiv s_j\bmod{M}\text{ for all }i\ne j\quad (i,j=1,2,\ldots,k).$$
Then
\begin{equation}\label{eq:main}
|\mathcal{R}_n^*(\mathcal{S})|\le 4(n-1)M^{n-2}\left(\sum_{a\in\mathcal{S}^*}d(a)\right)^2.
\end{equation}
\end{theorem}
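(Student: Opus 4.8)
The plan is to bound $|\mathcal{R}_n^*(\mathcal{S})|$ by counting, for each way of splitting a reducible $f$ into a product $f = g \cdot h$ of two nonconstant integer polynomials, the number of possible pairs $(g,h)$. If $f \in \mathcal{R}_n^*(\mathcal{S})$ is reducible with $a_0 a_n \ne 0$, then we may write $f = gh$ with $\deg g = m$, $\deg h = n-m$, $1 \le m \le n-1$, and neither $g$ nor $h$ divisible by $X$ (so the constant and leading coefficients of $g$ and $h$ are all nonzero). The key observation, due to Rivin and exploited by Bary--Soroker and Kozma, is that the leading coefficients satisfy $\mathrm{lc}(g)\cdot \mathrm{lc}(h) = a_n \in \mathcal{S}^*$ and the constant terms satisfy $g(0)\cdot h(0) = a_0 \in \mathcal{S}^*$; hence there are at most $d(a_n)$ choices for $\mathrm{lc}(g)$ (and then $\mathrm{lc}(h)$ is determined) and at most $d(a_0)$ choices for $g(0)$ (and then $h(0)$ is determined).

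Next I would bound the number of completions of $g$ (equivalently of $f$) once these four boundary coefficients are fixed, and this is where the hypothesis on $M$ enters. The idea is that knowing $f \bmod M$ determines $f$ uniquely among polynomials with coefficients in $\mathcal{S}$, because the residues $s_1,\dots,s_k$ are pairwise distinct mod $M$; so it suffices to count the number of possibilities for $g \bmod M$. Now $g \bmod M$ is a polynomial of degree $m$ over $\mathbb{Z}/M\mathbb{Z}$ whose leading and constant coefficients are already pinned down, leaving $m-1$ free coefficients, hence at most $M^{m-1}$ choices; symmetrically $h \bmod M$ has at most $M^{n-m-1}$ choices. But $g \bmod M$ and $h \bmod M$ are not independent — their product must be $f \bmod M$, which has coefficients in $\{s_1,\dots,s_k\} \bmod M$ — so in fact it is cleaner to say: the pair $(g \bmod M, h \bmod M)$ is determined by $g \bmod M$ alone once $h(0)$ and $\mathrm{lc}(h)$ are fixed is \emph{not} quite true, so instead I bound by $M^{m-1} \cdot M^{n-m-1} = M^{n-2}$ the number of pairs of residue-reduced cofactors with fixed boundary data, and then note that each such pair gives at most one $f \in \mathcal{R}_n^*(\mathcal{S})$ via the product and the injectivity of reduction mod $M$ on $\mathcal{S}$-coefficient polynomials.

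Assembling the pieces: summing over the splitting degree $m$ from $1$ to $n-1$ gives a factor $n-1$; summing over the $d(a_n)$ choices of $\mathrm{lc}(g)$ and $d(a_0)$ choices of $g(0)$, with $a_0, a_n$ ranging over $\mathcal{S}^*$, produces $\left(\sum_{a\in\mathcal{S}^*} d(a)\right)^2$; the residue count contributes $M^{n-2}$; and a small constant factor (at most $4$) absorbs the overcounting coming from the ordered-versus-unordered factorization and from the sign ambiguity $g \mapsto -g$, $h \mapsto -h$. This yields
$$|\mathcal{R}_n^*(\mathcal{S})| \le 4(n-1)M^{n-2}\left(\sum_{a\in\mathcal{S}^*} d(a)\right)^2,$$
as claimed.

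**Main obstacle.** The delicate point is organizing the count of $(g,h)$ modulo $M$ so that the boundary coefficients are genuinely fixed and the residue-class freedom is exactly $M^{m-1}$ and $M^{n-m-1}$, while ensuring the injectivity of $f \mapsto f \bmod M$ is correctly invoked (it applies to $f$, whose coefficients lie in $\mathcal{S}$, but not directly to $g$ or $h$, whose coefficients need not lie in $\mathcal{S}$ — so the argument must recover $f$ from the product of the reduced cofactors, not identify $g,h$ themselves). Getting the constant exactly $4$ rather than something larger requires care in how one handles the factorization $f = gh$ versus $f = (-g)(-h)$ and whether $g,h$ are allowed to coincide in degree; I expect a clean accounting via Euler's identity $\sum_{m} 1 = n-1$ for the degree split together with a factor-of-$2$ for sign and a factor-of-$2$ for the order of the two factors.
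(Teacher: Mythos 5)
Your overall strategy is the same as the paper's: split $f=gh$ by degree, pin down the constant and leading coefficients of the cofactors as divisors of $a_0$ and $a_n$, pass to $\mathbb{Z}/M\mathbb{Z}[X]$, count pairs of reduced cofactors with fixed boundary data by $M^{s-1}M^{t-1}=M^{n-2}$, and recover at most one $f$ per pair from the injectivity of reduction mod $M$ on $\mathcal{S}$-coefficient polynomials (you correctly isolate the point that injectivity applies to $f$, not to $g,h$). That part matches the paper essentially line for line.

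The genuine gap is in your accounting of the constant $4$. You assert ``at most $d(a_n)$ choices for $\mathrm{lc}(g)$ and at most $d(a_0)$ choices for $g(0)$,'' which counts only \emph{positive} divisors, and you then try to justify the factor $4$ as absorbing overcounting from ordered-versus-unordered factorizations and from the flip $g\mapsto -g$, $h\mapsto -h$. This is backwards: in an upper bound, overcounting never needs to be absorbed; the problem with your base count is \emph{undercoverage}, since $\mathrm{lc}(g)$ and $g(0)$ may be negative divisors. Concretely, take $f=(X-1)^2=X^2-2X+1$ with $\mathcal{S}\ni 1,-2$: here $a_0=a_2=1$, yet in every factorization either both constant terms are $-1$ or both leading coefficients are $-1$, so $f$ is never reached if the boundary data ranges only over positive divisors. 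The global sign flip lets you normalize $\mathrm{lc}(g)>0$, which accounts for one factor of $2$, but $g(0)$ can still be negative, so you need $g(0)$ to range over all $2d(a_0)$ integer divisors; the ordered/unordered consideration contributes nothing toward covering these sign classes. The paper simply lets $b_0\mid a_0$ and $b_s\mid a_n$ run over all integer divisors, giving $2d(a_0)\cdot 2d(a_n)$ choices, and this is exactly the source of the $4$ in \eqref{eq:main}. With that correction (and dropping the irrelevant worry about coinciding degrees; also note Euler's identity plays no role in this theorem, only in the application of Section 3), your argument becomes the paper's proof.
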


\begin{remark}
One readily notes that a possible value of $M$ is $\max\mathcal{S}-\min\mathcal{S}+1$. However, for some $\mathcal{S}$, we could even find smaller $M$. For example, in the case of Bary-Soroker and Kozma's Proposition \ref{prop:BSK}, that is, $\mathcal{S}$ being the set of odd integers in the interval $[-2h+1,2h-1]$, they chose $M=2h+1$.
\end{remark}

\begin{proof}
We only need to slightly modify Bary-Soroker and Kozma's proof of Proposition \ref{prop:BSK}. Let $\Omega_n(\mathcal{S})$ be the set of polynomials with $a_i\in\mathcal{S}$ and $a_0a_n\ne 0$. We also fix $s,t>0$ with $s+t=n$ and $b_0,c_0,b_s,c_t\in\mathbb{Z}$ with $a_0=b_0c_0$ and $a_n=b_sc_t$ where $a_0,a_n\in\mathcal{S}^*$. Now we need to count the set $V=V(s,t,b_0,b_s,c_0,c_t)$ containing all polynomials $f\in\Omega_n(\mathcal{S})$ such that $f=pq$ with $\deg p=s$, $\deg q=s$, $p(0)=b_0$, $q(0)=c_0$, and leading coefficients of $p$ and $q$ being $b_s$ and $c_t$, respectively. This implies
$$|\mathcal{R}_n^*(\mathcal{S})|\le \sum_{a_0,a_n}\sum_{b_0\mid a_0,b_s\mid a_n}\sum_{s+t=n}|V(s,t,b_0,b_s,a_0/b_0,a_n/b_s)|.$$

Next we bound $|V(s,t,b_0,b_s,c_0,c_t)|$. The method is essentially the same as that of Bary-Soroker and Kozma. We consider the map $\phi:\Omega_n(\mathcal{S})\to \mathbb{Z}/M\mathbb{Z}[X]$ with
$$\phi(f)\equiv f\bmod{M}$$
for $f\in\Omega_n(\mathcal{S})$. Since $s_i\not\equiv s_j\bmod{M}$ for all $i\ne j$ $(i,j=1,2,\ldots,k)$, it follows that $\phi$ is injective. For any $\bar{p}$ (resp. $\bar{q}$) in $\mathbb{Z}/M\mathbb{Z}[X]$ with $\deg \bar{p}=s$ (resp. $\deg \bar{q}=t$), $\bar{p}(0)\equiv b_0\bmod{M}$ (resp. $\bar{q}(0)\equiv c_0\bmod{M}$), and leading coefficient $\bar{b}_s\equiv b_s\bmod{M}$ (resp. $\bar{c}_t\equiv c_t\bmod{M}$), we claim that the pair $(\bar{p},\bar{q})$ will identify at most one $f\in V(s,t,b_0,b_s,c_0,c_t)$ through the relation
$$\phi(\bar{p}\bar{q})=\phi(f),$$
since $\phi$ is injective. On the other hand, for any $f\in V(s,t,b_0,b_s,c_0,c_t)$ with $f=pq$, we can always find a pair $(\bar{p},\bar{q})=(\phi(p),\phi(q))$ such that
$$\phi(\bar{p}\bar{q})=\phi(f).$$
We therefore conclude that
$$|V(s,t,b_0,b_s,c_0,c_t)|\le \sum_{(\bar{p},\bar{q})}1=M^{s-1}M^{t-1}=M^{n-2}.$$

To complete our proof, we have
\begin{align*}
|\mathcal{R}_n^*(\mathcal{S})|&\le \sum_{a_0,a_n}\sum_{b_0\mid a_0,b_s\mid a_n}\sum_{s+t=n}|V(s,t,b_0,b_s,a_0/b_0,a_n/b_s)|\\
&\le (n-1)M^{n-2} \sum_{a_0,a_n}\sum_{b_0\mid a_0,b_s\mid a_n}1\\
&= (n-1)M^{n-2} \left(2\sum_{a\in\mathcal{S}^*}d(a)\right)^2.
\end{align*}
\end{proof}

It is also noteworthy to mention Kuba's bound \eqref{eq:Kuba}. In fact, he counted the set
$$\mathcal{P}_n^*(h)=\left\{(p,q)\in(\mathbb{Z}[X]\backslash\mathbb{Z})^2:\deg p+\deg q=n \text{ and }H(p)H(q)\le e^nh\right\}.$$
Comparing with our proof, in which we restrict the coefficients of $p$ and $q$ to $\mathbb{Z}/M\mathbb{Z}$, we conclude that Kuba's bound works better for $n=o(\log h)$.

\section{An application of Theorem \ref{th:main}}

We first step back to the last step of Bary-Soroker and Kozma's proof. As they showed in their Section 3, by substituting $Y = 2$ in $F(X, Y)$, they got
\begin{equation}\label{eq:3.1}
F(X,2)=\sum_{i=0}^n\left(\sum_{j=0}^n \pm 2^j\right)X^i.
\end{equation}
Now they only need to use the straightfoward argument that if $F(X,Y)$ is reducible, then either of the following holds: 1) $F(X,2)$ is reducible; 2) $F(2,Y)$ is reducible; 3) $F(X,Y)=f(X)g(Y)$ for some polynomials $f$ and $g$.

At a glimpse of the inner summation of the right hand of \eqref{eq:3.1}, the following identity of Euler may immediately come to the reader's mind:
\begin{equation}
\prod_{n=0}^\infty\left(x^{-3^n}+1+x^{3^n}\right)=\sum_{n=-\infty}^\infty x^n.
\end{equation}
This identity was given in Chapter 16 of Euler's \textit{Introductio in analysin infinitorum} which is entitled ``\textit{De Partitio Numerorum}''. The reader may refer to J. Blanton's translation \cite{Eul1988} of Euler's book. In fact, one may readily prove by induction that
\begin{equation}
\prod_{n=0}^{N-1}\left(x^{-3^n}+1+x^{3^n}\right)=\sum_{n=-(3^N-1)/2}^{(3^N-1)/2} x^n;
\end{equation}
see \cite[Eq. (5.4)]{And2007}, which is also an excellent expository article describing Euler's pioneering work.

Now this identity of Euler along with Theorem \ref{th:main} immediately give

\begin{theorem}\label{th:3.1}
Let $F=F(X,Y)=\sum_{i,j\le n}\varepsilon_{i,j}X^iY^j$ be a bivariate polynomial of degree $n$ with random coefficients $\varepsilon_{i,j}\in\{0,\pm 1\}$. Then
$$\lim_{n\to\infty}\mathbb{P}(F\ \mathrm{reducible})=0.$$
\end{theorem}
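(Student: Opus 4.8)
The plan is to mimic Bary-Soroker and Kozma's Section 3 argument, replacing their specialization $Y=2$ by the substitution $X\mapsto x^{3^0},\,x^{3^1},\dots$ suggested by Euler's identity, so that the resulting univariate polynomial has coefficients in a set $\mathcal{S}$ for which Theorem~\ref{th:main} gives a useful bound. Concretely, given $F=F(X,Y)=\sum_{i,j\le n}\varepsilon_{i,j}X^iY^j$ with $\varepsilon_{i,j}\in\{0,\pm1\}$, I would first record the elementary trichotomy: if $F$ is reducible over $\mathbb{Q}$ (equivalently over $\mathbb{Z}$, by Gauss), then at least one of the following holds --- (1) $F(X,Y)=f(X)g(Y)$ is a product of a nonconstant polynomial in $X$ alone and a nonconstant polynomial in $Y$ alone; (2) the specialization $F(X,3)$ (say) is reducible in $\mathbb{Z}[X]$; (3) the specialization $F(x^{?},\cdot)$ obtained by a suitable substitution in the \emph{other} variable is reducible. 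More efficiently, I would substitute $Y\mapsto 3$, or rather follow Euler and substitute in one variable the powers $3^0,3^1,\dots,3^{n}$ of a new indeterminate: set $G(x)=F(x,\,x^{3^{0}})$ is not quite right, so let me instead substitute $X=x$ and $Y=x^{m}$ for a large integer $m=3^{n+1}$, giving $G(x)=F(x,x^{m})=\sum_{i,j\le n}\varepsilon_{i,j}x^{i+jm}$; since $m>n$, the exponents $i+jm$ are all distinct and lie in $[0,n(1+m)]$, so the coefficient of $x^k$ in $G$ is a single $\varepsilon_{i,j}\in\{0,\pm1\}$. If $F$ is reducible and case (1) fails, then $G(x)$ is reducible, and --- crucially --- $G$ has constant term $\varepsilon_{0,0}$ and leading coefficient $\varepsilon_{n,n}$, which we may assume nonzero after a union-bound over the $O(1)$-probability bad events (and the boundary cases where many $\varepsilon_{i,j}$ vanish, which themselves occur with probability $\le C\cdot 3^{-\text{(something)}\cdot n}$).

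Having reduced to a univariate statement, the key computation is to bound $\mathbb{P}(G\text{ reducible})$ using Theorem~\ref{th:main} with $\mathcal{S}=\{0,\pm1\}$ and, say, $M=3$ (indeed $0,1,-1$ are pairwise distinct mod $3$). The degree of $G$ is $N:=n(1+m)=n(1+3^{n+1})$, and the number of polynomials in $\Omega_N(\mathcal{S})$ with nonzero constant term and leading coefficient --- the natural denominator for the probability --- is $2^2\cdot 3^{N-1}$; of course $G$ is not uniform in $\Omega_N(\mathcal{S})$ (most coefficients are forced to be $0$), so I would instead bound the probability directly: the number of choices of the $\varepsilon_{i,j}$ producing a reducible $G$ is at most $|\mathcal{R}_N^*(\mathcal{S})|$, which by \eqref{eq:main} is at most $4(N-1)\cdot 3^{N-2}\cdot\bigl(\sum_{a\in\{0,\pm1\}^*}d(a)\bigr)^2=4(N-1)3^{N-2}\cdot(1+1)^2=16(N-1)3^{N-2}$, while the total number of choices of $(\varepsilon_{i,j})$ (over the $(n+1)^2$ index pairs, restricted to those giving nonzero $\varepsilon_{0,0},\varepsilon_{n,n}$) is $2^2\cdot 3^{(n+1)^2-2}$. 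Hence
\[
\mathbb{P}(G\text{ reducible},\ \varepsilon_{0,0}\varepsilon_{n,n}\ne0)\ \le\ \frac{16(N-1)\,3^{N-2}}{4\cdot 3^{(n+1)^2-2}}\ =\ 4(N-1)\,3^{\,N-(n+1)^2}.
\]
For this to tend to $0$ we need $N<(n+1)^2$ eventually, i.e. $n(1+3^{n+1})<(n+1)^2$ --- which is \emph{false}. So the naive substitution $Y\mapsto x^{3^{n+1}}$ wastes far too much; the right move (this is exactly the point of Euler's identity) is to substitute each monomial $X^i$ by a \emph{signed ternary block} so that the total degree stays linear in $n$. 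Following \eqref{eq:3.1} and Euler's product, one substitutes $X\mapsto x$ and $Y\mapsto 3$ in $F$, so that $F(X,3)=\sum_{i\le n}\bigl(\sum_{j\le n}\varepsilon_{i,j}3^{j}\bigr)X^{i}=\sum_{i\le n}c_i X^i$ with each coefficient $c_i=\sum_{j\le n}\varepsilon_{i,j}3^j$ ranging over the set $\mathcal{S}=\{\sum_{j=0}^n e_j 3^j: e_j\in\{0,\pm1\}\}$, which by Euler's identity is \emph{exactly} the interval of integers $[-(3^{n+1}-1)/2,\,(3^{n+1}-1)/2]$ and, moreover, the representation is unique --- so the $c_i$ are independent and uniform in $\mathcal{S}$. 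Now $|\mathcal{S}|=3^{n+1}$ and one may take $M=3^{n+1}$ in Theorem~\ref{th:main}.

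With this correct setup the endgame runs as follows. Write $h:=(3^{n+1}-1)/2$, so $\mathcal{S}=\{-h,\dots,h\}$ and $M=2h+1=3^{n+1}$. A univariate polynomial of degree $n$ with coefficients drawn uniformly and independently from $\mathcal{S}$ with nonzero constant and leading term is uniform in a set of size $(2h)^2(2h+1)^{n-1}$ (choices for $c_0,c_n\in\mathcal{S}^*$ and $c_1,\dots,c_{n-1}\in\mathcal{S}$). By \eqref{eq:main},
\[
|\mathcal{R}_n^*(\mathcal{S})|\ \le\ 4(n-1)\,M^{n-2}\Bigl(\sum_{a\in\mathcal{S}^*}d(a)\Bigr)^{2},
\]
and since $\sum_{1\le a\le h}d(a)=h\log h+O(h)$ we get $\sum_{a\in\mathcal{S}^*}d(a)=2\sum_{a=1}^h d(a)\le C h\log h$, hence
\[
\mathbb{P}\bigl(F(X,3)\text{ reducible}\mid c_0c_n\ne0\bigr)\ \le\ \frac{4(n-1)(2h+1)^{n-2}\cdot C^2 h^2(\log h)^2}{(2h)^2(2h+1)^{n-1}}\ \le\ \frac{C'\,(n-1)(\log h)^2}{h},
\]
and with $h=(3^{n+1}-1)/2$ one has $\log h\asymp n$, so this is $O(n^3\,3^{-n})\to0$. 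Combining with the trivial-boundary bound (the event $c_0c_n=0$, together with the event that case (1) $F=f(X)g(Y)$ occurs, which one estimates crudely by counting factorizations), and invoking the symmetric estimate for $F(3,Y)$, the trichotomy yields $\mathbb{P}(F\text{ reducible})\to0$. The main obstacle --- and the place where Euler's identity is indispensable rather than cosmetic --- is precisely the verification that after substituting $Y\mapsto 3$ the induced coefficients $c_i$ are uniform on an interval of length comparable to $3^{n}$ (so that $\log h$ is only linear in $n$ and the $M^{n-2}$ in \eqref{eq:main} is beaten by the $M^{n-1}$ in the denominator with room to spare); the remaining work --- handling case (1), the boundary events $c_0c_n=0$, and the passage from "reducible over $\mathbb{Q}$" to the trichotomy --- is routine and already sketched in Bary-Soroker and Kozma's Section~3.
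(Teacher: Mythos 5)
Your proposal is correct and, after discarding your own false start with the substitution $Y\mapsto x^{3^{n+1}}$, it follows essentially the same route as the paper: specialize $Y=3$, use the uniqueness of balanced ternary representations (Euler's identity) to see that the coefficients $c_i$ are uniform and independent on $\{-h,\dots,h\}$ with $h=(3^{n+1}-1)/2$, apply Theorem \ref{th:main} with $M=2h+1=3^{n+1}$ together with $\sum_{a\le h}d(a)\ll h\log h$ to get a bound of order $n^3 3^{-n}$, and finish with the trichotomy ($F(X,3)$ reducible, $F(3,Y)$ reducible, or $F=f(X)g(Y)$) plus crude bounds for the boundary and separable events. The only differences are cosmetic bookkeeping (conditioning on $c_0c_n\ne 0$ versus subtracting the $a_0a_n=0$ count), so no gap remains.
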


\begin{proof}
We substitute $Y = 3$ in $F(X, Y)$. Then
\begin{equation}\label{eq:3.4}
F(X,3)=\sum_{i=0}^n\left(\sum_{j=0}^n \varepsilon_{i,j} 3^j\right)X^i,
\end{equation}
where $\varepsilon_{i,j}\in\{0,\pm 1\}$. Thanks to Euler's identity, we immediately see that the right hand side of \eqref{eq:3.4} consists of all integer coefficient polynomials with degree $\le n$ and height $\le (3^{n+1}-1)/2=h^*$. Note also that the number of such polynomials with $a_0a_n=0$ is less than $2(2h^*+1)^n$. This implies that we only need to consider the probability $\mathbb{P}(f\ \mathrm{reducible})$ where $f$ is a random integer coefficient polynomial with $\deg f=n$, $H(f)\le h^*$, and $f(0)\ne 0$. Now by Theorem \ref{th:main}, we have
$$|\mathcal{R}_n^*(h^*)|\le 4(n-1)(2h^*+1)^{n-2}\left(2\sum_{n=1}^{h^*}d(n)\right)^2,$$
where we put $M=2h^*+1$. Hence
$$\mathbb{P}(F(X,3)\ \mathrm{reducible})\ll \frac{|\mathcal{R}_n^*(h^*)|}{(2h^*+1)^{n+1}}\ll \frac{n^3}{3^n}\quad (n\to\infty).$$
Here we use the approximation
$$\sum_{n\le x}d(x)\sim x\log x\quad (x\to\infty).$$

At last, similar to Bary-Soroker and Kozma's argument, we notice that if $F(X,Y)$ is reducible, then either of the following holds: 1) $F(X,3)$ is reducible; 2) $F(3,Y)$ is reducible; 3) $F(X,Y)=f(X)g(Y)$. We also have
$$\mathbb{P}(F(X,Y)=f(X)g(Y))\le \frac{3^{n+1}\cdot 3^{n+1}}{3^{(n+1)^2}}\ll 3^{-n^2} \quad (n\to\infty),$$
since both $f$ and $g$ have coefficients in $\{0,\pm 1\}$. Hence
\begin{align*}
\mathbb{P}(F(X,Y)\ \mathrm{reducible})\ll \frac{n^3}{3^n}\to 0\quad (n\to\infty).
\end{align*}
This ends our proof.
\end{proof}

\bibliographystyle{amsplain}

\end{document}